\newcommand\numberthis{\addtocounter{equation}{1}\tag{\theequation}}
\newcommand{\lbar}{\bar{\ell}}
\newcommand{\herm}{\mathrm{H}}
\newcommand{\symp}{\mathrm{W}}
\newcommand{\PG}{\mathrm{PG}}
\newcommand{\GQ}{\Gamma}
\newcommand{\subGQ}{\Gamma'}
\newtheorem{theorem}{Theorem}[section]
\newtheorem{lemma}[theorem]{Lemma}
\newtheorem{proposition}[theorem]{Proposition}
\newtheorem{corollary}[theorem]{Corollary}
\begin{document}

\title[A relative $m$-cover of a Hermitian surface is a relative hemisystem]{A relative $m$-cover of a Hermitian surface\\ is a relative hemisystem}

\author{John Bamberg \and Melissa Lee}

\address{
Centre for the Mathematics of Symmetry and Computation,\\ 
School of Mathematics and Statistics\\
The University of Western Australia,\\
35 Stirling Highway, Crawley, WA 6009, Australia. }

 \email{john.bamberg@uwa.edu.au}   
 \email{melissa.lee@research.uwa.edu.au} 

\keywords{Relative hemisystem \and Generalised quadrangle \and Hermitian surface}
 \subjclass[2010]{05E30 \and 51E12}

\maketitle

\begin{abstract}
An \emph{$m$-cover} of the Hermitian surface $\herm(3,q^2)$ of $\PG(3,q^2)$ is a set $\mathcal{S}$ of lines of  
$\herm(3,q^2)$ such that every point of $\herm(3,q^2)$ lies on exactly $m$ lines of $\mathcal{S}$, and $0<m<q+1$.
Segre (1965) proved that if $q$ is odd, then $m=(q+1)/2$, and called such a set $\mathcal{S}$ of lines a \emph{hemisystem}.
Penttila and Williford (2011) introduced the notion of a \emph{relative hemisystem}: a set of lines $\mathcal{R}$ of $\herm(3,q^2)$, $q$ even, disjoint from
a symplectic subgeometry $\symp(3,q)$ such that every point of $\herm(3,q^2)\setminus \symp(3,q)$ lies on exactly $q/2$ elements of $\mathcal{R}$.
In this paper, we provide an analogue of Segre's result by introducing \emph{relative $m$-covers} of $\herm(3,q^2)$ with respect to
a symplectic subgeometry and proving that $m$ must necessarily be $q/2$.
\end{abstract}

\section{Introduction}
\label{intro}
An \emph{$m$-cover} of the Hermitian surface $\herm(3,q^2)$ is a set $\mathcal{S}$ of lines of $\herm(3,q^2)$ such that each point of $\herm(3,q^2)$ lies on precisely $m$ elements of $\mathcal{S}$. An $m$-cover is sometimes also called a \emph{regular system of order $m$}. It was shown by Beniamino Segre (1965) \cite{Segre:1965aa}
that $\herm(3,q^2)$ does not have spreads, that is $1$-covers, for every prime power $q$.
Moreover, Segre determined that the only (nontrivial) $m$-covers of $\mathrm{H}(3,q^2)$, $q$ odd, have $m=\tfrac{q+1}{2}$ \cite{Segre:1965aa}. He called these $m$-covers \textit{hemisystems} 
as they constitute half of the lines on every point. Hemisystems of $\herm(3,q^2)$ have piqued the interest of finite geometers and algebraic graph theorists alike due to their links to 
extremal configurations in combinatorics, such as strongly regular graphs \cite{Cameron:1978aa}, partial quadrangles \cite{Cameron:1975aa} and association schemes \cite{Dam:2013aa}. 

In 2011, Penttila and Williford defined the concept of a \textit{relative hemisystem} on $\herm(3,q^2)$, for $q$ a power of 2 \cite{penttila2011new}. In this context, we consider points and lines that are disjoint or \textit{external} to a symplectic subgeometry $\symp(3,q)$. In particular, a relative hemisystem of $\herm(3,q^2)$  is a set $\mathcal{R}$ of external lines with respect to $\symp(3,q)$ such that each external point lies on $\tfrac{q}{2}$ elements
of $\mathcal{R}$. The interest in relative hemisystems stems from the fact that they give rise to primitive $Q$-polynomial association schemes that cannot be constructed from distance regular graphs (see \cite[Theorem 3]{penttila2011new}). These particular sorts of association schemes were considered rare before the introduction of relative hemisystems, and the first infinite family of relative hemisystems gave rise to the first infinite family of such association schemes. 

In this paper, we generalise the concept of a relative hemisystem naturally to a \textit{relative $m$-cover}. 
Let $\GQ$ be a generalised quadrangle with a subquadrangle $\subGQ$.
A relative $m$-cover of $\GQ$ is a set $\mathcal{R}$ of external lines with respect to $\subGQ$ such that each external point lies on $m$ elements
of $\mathcal{R}$. By considering relative $m$-covers, we are able to prove an analogue of Segre's result that initiated the study of hemisystems. 
In particular, we prove that every nontrivial relative $m$-cover of a certain generalised quadrangle $\Gamma$ with respect to
a \emph{doubly subtended} subquadrangle $\subGQ$ is a relative hemisystem; which
includes the case that $\Gamma$ is $\herm(3,q^2)$ and $\subGQ$ is $\symp(3,q)$. The definition of a doubly subtended subquadrangle
appears in the next section in the context of subtended spreads of lines. 

\begin{theorem}
\label{bigthm}
Let $\GQ$ be a generalised quadrangle of order $(q^2,q)$ and suppose it has a doubly subtended subquadrangle $\subGQ$ 
of order $(q,q)$. Let $\mathcal{R}$ is a nontrivial relative $m$-cover of $\GQ$ with respect to $\subGQ$. Then:
\begin{enumerate}[(a)]
\item $q$ is even and $\mathcal{R}$ is a relative hemisystem, that is, $m=\frac{q}{2}$.
\item If $\sigma$ is an involutory automorphism fixing $\subGQ$ point-wise, then the image of $\mathcal{R}$ under $\sigma$ is its complement.
\end{enumerate} 
\end{theorem}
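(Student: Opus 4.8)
The plan is to reduce the whole statement, via one spectral identity, to an eigenvalue computation for a ``folded'' graph, where the doubly subtended hypothesis does the essential work. First I would record the numerology. Since a point of $\subGQ$ lies on $q+1$ lines of $\subGQ$ and on $q+1$ lines of $\GQ$, every line of $\GQ$ meeting $\subGQ$ is already a line of $\subGQ$; hence $\GQ$ has no tangent lines, every external point lies on exactly $q$ external lines, and $\GQ$ has $N:=q^2(q^2-1)$ external lines, each carrying $q^2+1$ (necessarily external) points. Each external point $P$ lies on a unique line $\mu(P)$ of $\subGQ$, and for an external line $\ell$ the $q^2+1$ lines $\mu(P)$ with $P\in\ell$ are pairwise non-concurrent by the generalised quadrangle axiom, so they form a spread $\Sigma_\ell$ of $\subGQ$. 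The involution $\sigma$ fixing $\subGQ$ pointwise permutes external points and external lines, satisfies $\mu\circ\sigma=\mu$ and hence $\Sigma_{\ell^\sigma}=\Sigma_\ell$, and sends $\ell$ to a line $\ell^\sigma$ skew to $\ell$ (a common point of $\ell$ and $\ell^\sigma$ would be $\sigma$-fixed, hence in $\subGQ$); that $\subGQ$ is doubly subtended means precisely that $\ell\mapsto\Sigma_\ell$ is two-to-one, so $\{\ell,\ell^\sigma\}$ is the full fibre. A double count of external-point/line-of-$\mathcal R$ incidences gives $|\mathcal R|=\tfrac mq N$, and nontriviality amounts to $0<m<q$.

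The engine is as follows. Let $A$ be the adjacency matrix of the concurrency graph on the $N$ external lines, and let $v$ be the characteristic vector of $\mathcal R$. Counting, for a fixed external line $\ell$ and each of its $q^2+1$ points $P$, the lines of $\mathcal R$ through $P$ other than $\ell$, one gets $Av=(q^2+1)m\,\mathbf 1-(q^2+1)v$, that is
\[(A+(q^2+1)I)\,v=(q^2+1)m\,\mathbf 1 .\]
Since $\mathbf 1$ is the Perron eigenvector of $A$ with eigenvalue $(q^2+1)(q-1)$, it follows that $w:=v-\tfrac mq\mathbf 1$ lies in $E:=\ker\bigl(A+(q^2+1)I\bigr)$, and $w\neq 0$ because $\mathcal R$ is nontrivial. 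As $\sigma$ commutes with $A$ it acts on $E$; write $E=E^+\oplus E^-$ for the decomposition into its $\sigma$-fixed and $\sigma$-negated subspaces. If $w\in E^-$ then, since $\sigma$ fixes $\mathbf 1$, we get $v^\sigma=\tfrac mq\mathbf 1-w$, whence $v+v^\sigma=\tfrac{2m}q\mathbf 1$; the left-hand side has entries in $\{0,1,2\}$ and is not identically $0$ or $2$, so $v+v^\sigma=\mathbf 1$, which is exactly part~(b), and then $|\mathcal R|=\tfrac12 N$ forces $m=q/2$ and hence $q$ even, which is part~(a). So everything comes down to proving $E^+=0$.

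To handle $E^+$ I would pass to the quotient. Because $\subGQ$ is doubly subtended, the $\sigma$-orbits on external lines are the fibres of $\ell\mapsto\Sigma_\ell$, so the $\sigma$-fixed vectors are exactly the pullbacks of functions on the set $\mathcal F$ of subtended spreads ($|\mathcal F|=N/2$), and $A$ restricted to this subspace is the folded matrix $B$ on $\mathcal F$, where $B_{\Sigma\Sigma'}$ is the number of external lines subtending $\Sigma'$ that meet a fixed external line subtending $\Sigma$ (well defined, using $\sigma$). Using that $\ell$ and $\ell^\sigma$ are always skew, $B$ has zero diagonal and is $(q^2+1)(q-1)$-regular, and $E^+\neq 0$ if and only if $-(q^2+1)$ is an eigenvalue of $B$. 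The heart of the proof is therefore to compute the spectrum of $B$ and to check that $-(q^2+1)$ does \emph{not} occur; this is the one place where the full strength of ``doubly subtended'' is used, since it forces enough regularity on how distinct subtended spreads meet (equivalently, on how an external line sits relative to the spreads it does not subtend) for $B$ to lie in a small association scheme whose eigenvalues can be read off. I expect this eigenvalue computation to be the main obstacle; everything before it is bookkeeping. A more hands-on alternative leading to the same point is a Segre-style double count of the flags (external point, line of $\mathcal R$) along the transversals of the skew pairs $\{\ell,\ell^\sigma\}$ — through each external point of such a transversal pass exactly $m$ lines of $\mathcal R$, all subtending a spread through the corresponding line of $\subGQ$ — which again closes precisely once the pertinent intersection numbers of subtended spreads are known.
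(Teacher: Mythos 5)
Your reduction is sound up to its final step, and it is essentially the paper's mechanism viewed from the primal side: your identity $(A+(q^2+1)I)v=(q^2+1)m\,j$ is exactly the paper's computation $\chi_{\mathcal R}A_3=m(q^2+1)j-(q^2+1)\chi_{\mathcal R}$, and your endgame (the entries of $v+v^\sigma$ lie in $\{0,1,2\}$, nontriviality rules out the constants $0$ and $2$, hence $v+v^\sigma=j$, $m=q/2$ and $q$ even) is verbatim the paper's conclusion. The gap is that you never prove the one claim on which everything hinges, namely $E^+=0$, i.e.\ that the $-(q^2+1)$-eigenspace of the concurrency graph contains no nonzero $\sigma$-fixed vector. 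You explicitly defer the spectral computation of the folded matrix $B$ and only ``expect'' it to close. Since this is precisely where the doubly subtended hypothesis must do its work (and where the statement could in principle fail), what you have written is a correct reduction, not a proof.

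The missing step is true and is fillable with the tools you name. Brown's intersection numbers for subtended spreads (the table in Section 2, repackaged as Lemma \ref{extlinesint}) show that for nonconcurrent external lines the size of $\{\ell,n\}^{\perp_E}$ is $q^2$ or $q^2-q$ according as $n$ is or is not concurrent with $\bar{\ell}$; folding over the antipodal pairs, this makes $B$ a connected $(q^2+1)(q-1)$-regular graph on $q^2(q^2-1)/2$ vertices whose number of common neighbours of two vertices depends only on adjacency, i.e.\ a strongly regular graph. Its restricted eigenvalues come out to $q-1$ and $-(q-1)^2$, neither of which equals $-(q^2+1)$, so $E^+=0$. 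Equivalently, in the Penttila--Williford scheme the eigenvalues of $A_3$ on $V_0,\dots,V_4$ are $(q^2+1)(q-1)$, $-(q^2+1)$, $q-1$, $q-1$, $-(q-1)^2$, so $\ker\bigl(A_3+(q^2+1)I\bigr)=V_1$, on which $A_4$ (the permutation matrix of $\sigma$) acts as $-1$. Once completed, your route is actually shorter than the published one: the authors instead prove that the vectors $(q^3-q)\chi_{[P]}-j$ span $V_2\perp V_3\perp V_4$ (Theorem \ref{chipsspan}) in order to force $\chi_{\mathcal R}\in V_0\perp V_1$, and then extract $\chi_{\mathcal R}+\chi_{\mathcal R^\sigma}=\tfrac{2m}{q}j$ from $\chi_{\mathcal R}E_2=0$. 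To finish, you must either carry out the strongly-regular-graph computation for $B$ from the intersection numbers, or cite the dual eigenmatrix of the scheme; without one of these the argument is incomplete at its decisive point.
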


Thas \cite{Thas:1998aa} showed that when $q$ is even, the only generalised quadrangle $\Gamma$ of order $(q^2,q)$ with a doubly subtended subquadrangle $\Gamma'$ of order $(q,q)$ is when $\Gamma=\herm(3,q^2)$ and 
$\Gamma'=\symp(3,q)$. For $q$ odd, the only known examples are
$\herm(3,q^2)$ (with doubly subtended quadrangle $\symp(3,q)$) and the \emph{Kantor-Knuth} flock generalised quadrangles.
(For \emph{dual translation generalised quadrangles}, K. Thas \cite{Thas:2007aa} showed that these two families of examples
are precisely the only generalised quadrangles of order $(q^2,q)$ with doubly subtended subquadrangles).

\newpage

\begin{corollary}\leavevmode
\begin{enumerate}
\item[\textrm{(a)}] A nontrivial relative $m$-cover of $\herm(3,q^2)$ with respect to a symplectic subgeometry $\symp(3,q)$
is a relative hemisystem.
\item[\textrm{(b)}] A Kantor-Knuth flock generalised quadrangle of order $(q^2,q)$, $q$ odd, does not have
a nontrivial relative $m$-cover with respect to a double subtended subquadrangle.
\end{enumerate}
\end{corollary}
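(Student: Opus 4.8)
The plan is to obtain both parts as immediate consequences of Theorem \ref{bigthm}; the only thing that needs checking is that, in each case, the generalised quadrangle and subquadrangle at hand satisfy the hypotheses of that theorem.

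For part (a), I would first recall that $\herm(3,q^2)$, regarded as the incidence geometry of points and generators of the Hermitian surface of $\PG(3,q^2)$, is a generalised quadrangle of order $(q^2,q)$, and that a suitably chosen Baer subgeometry $\PG(3,q)$ of $\PG(3,q^2)$ meets $\herm(3,q^2)$ in a symplectic generalised quadrangle $\symp(3,q)$ of order $(q,q)$, which is a subquadrangle of $\herm(3,q^2)$. The one non-formal ingredient is that this embedding is \emph{doubly subtended}; I would cite the reference indicated in the paragraph following Theorem \ref{bigthm}, noting that this holds for every prime power $q$, not only for $q$ even. Once these hypotheses are in place, Theorem \ref{bigthm}(a) applies verbatim to any nontrivial relative $m$-cover $\mathcal{R}$ of $\herm(3,q^2)$ with respect to $\symp(3,q)$: it forces $q$ to be even and $m=q/2$, which is precisely the statement that $\mathcal{R}$ is a relative hemisystem.

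For part (b), I would use that a Kantor--Knuth flock generalised quadrangle $\GQ$ of order $(q^2,q)$ with $q$ odd is, as recalled after Theorem \ref{bigthm}, one of the generalised quadrangles of order $(q^2,q)$ possessing a doubly subtended subquadrangle $\subGQ$ of order $(q,q)$. Arguing by contradiction, suppose $\GQ$ admitted a nontrivial relative $m$-cover with respect to such a $\subGQ$. Then Theorem \ref{bigthm}(a) would force $q$ to be even, contradicting the standing assumption that $q$ is odd; hence no such relative $m$-cover exists.

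The only obstacle is bookkeeping rather than mathematics: one must be sure that ``doubly subtended'', as defined in the next section via subtended spreads of lines, is exactly the property enjoyed by $\symp(3,q)$ inside $\herm(3,q^2)$ and by the base subquadrangle of a Kantor--Knuth flock generalised quadrangle. Once that identification is made -- and it is classical -- both statements fall out of Theorem \ref{bigthm} with no further computation.
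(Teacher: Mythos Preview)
Your proposal is correct and matches the paper's treatment: the corollary is stated without a separate proof, being an immediate consequence of Theorem~\ref{bigthm}(a) together with the fact, recalled in the paragraph preceding it, that $\symp(3,q)\subset\herm(3,q^2)$ and the Kantor--Knuth subquadrangles are doubly subtended. There is nothing further to add.
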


We leave as an open problem whether or not the first part of Theorem \ref{bigthm} holds generally for relative $m$-covers of nonclassical generalised quadrangles of order $(q^2,q)$.

%
%

\section{Preliminaries}
\label{prelims}

The central object of study in this paper is the Hermitian surface $\herm(3,q^2)$, whose points and lines form a generalised quadrangle, and the language and theory of
generalised quadrangles will be beneficial. Furthermore, some of our preliminary results on relative $m$-covers hold in the more general context of generalised quadrangles,
which may be of interest. A \textit{generalised quadrangle of order $(s,t)$} is a point-line incidence structure satisfying the following axioms.
\begin{itemize}
\item Any two points are incident with at most one line.
\item Every point is incident with $t+1$ lines.
\item Every line is incident with $s+1$ points.
\item (GQ-axiom) For any point $P$ and line $\ell$ that are not incident, there is a unique point $P'$ on $\ell$ that is collinear with $P$.
\end{itemize}
Given a generalised quadrangle $\GQ$, we say that $\subGQ$ is a \emph{subquadrangle} of $\GQ$ if it is a generalised quadrangle, and if the sets of points and lines of $\subGQ$ are proper 
subsets of the points and lines of $\GQ$. The \textit{dual} of a generalised quadrangle of order $(s,t)$ is obtained by swapping the point and line sets while maintaining incidence. The dual is also a generalised quadrangle, and it is of order $(t,s)$.
For additional background on generalised quadrangles, see \cite{Payne:2009aa}.
In this paper we are interested in generalised quadrangles $\Gamma$ of order $(q^2,q)$, for some $q$ a power of two, and their subquadrangles $\subGQ$ of order $(q,q)$. The classical generalised quadrangles of order $(q^2,q)$ are the Hermitian spaces $\herm(3,q^2)$, obtained by taking the totally isotropic subspaces of $\PG(3,q^2)$ with respect to a Hermitian form. The symplectic space $\symp(3,q)$, which is obtained by taking the points of $\PG(3,q)$ together with the totally isotropic lines with respect to an alternating form,  is a generalised quadrangle of order $(q,q)$ and can be embedded as a subgeometry (and indeed a \textit{subquadrangle}) of $\herm(3,q^2)$
(see \cite[\S 3.5(a)]{Payne:2009aa}). We call points and lines that lie in $\herm(3,q^2)$ but not in $\symp(3,q)$ \textit{external points} and \textit{external lines} respectively. We denote the set of external points by $\mathcal{P}_E$ and the set of external lines by $\mathcal{L}_E$. A simple calculation shows that 
\begin{equation}\label{basiccount}
|\mathcal{P}_E| = (q^2+1)(q^3-q)\quad \text{and} \quad |\mathcal{L}_E| = q^2(q^2-1).
\end{equation}
We call a subset $\mathcal{R}$ of external lines a \textit{relative $m$-cover} of $\GQ$ with respect to $\subGQ$ if every external point is incident with exactly $m$ lines of $\mathcal{R}$. 
There is always a relative $0$-cover and a relative $q$-cover of $\GQ$ with respect to $\subGQ$, obtained by taking none or all of the external lines. We will call these two cases \textit{trivial}.
 
 \begin{lemma}
 \label{RmCsize}
 Suppose that $\mathcal{R}$ is a relative $m$-cover of $\GQ$ with respect to a subquadrangle $\subGQ$. Then $|\mathcal{R}| = m(q^3-q)$.
 \end{lemma}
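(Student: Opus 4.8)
The plan is to recover $|\mathcal{R}|$ from a double count of incident pairs between external points and lines of $\mathcal{R}$, so I do not expect any serious obstacle. The only point that needs a word of justification is that every point on a line of $\mathcal{R}$ is itself external; once that is in hand the lemma drops out immediately.

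First I would record that an external line is disjoint from $\subGQ$. Indeed, a point $P$ of $\subGQ$ lies on exactly $q+1$ lines of $\subGQ$ (since $\subGQ$ has order $(q,q)$) and on exactly $q+1$ lines of $\GQ$ (since $\GQ$ has order $(q^2,q)$); as every line of $\subGQ$ through $P$ is also a line of $\GQ$ through $P$, these two pencils coincide. Hence a line of $\GQ$ meeting $\subGQ$ in a point must already be a line of $\subGQ$, and therefore all $q^2+1$ points on an external line are external.

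Next I would count in two ways the set of pairs $(P,\ell)$ with $\ell\in\mathcal{R}$, $P\in\mathcal{P}_E$, and $P$ incident with $\ell$. Grouping by $\ell$ and using the previous step, each line of $\mathcal{R}$ contributes exactly $q^2+1$ such pairs, for a total of $(q^2+1)|\mathcal{R}|$. Grouping by $P$, each external point lies on exactly $m$ lines of $\mathcal{R}$ by the definition of a relative $m$-cover, for a total of $m\,|\mathcal{P}_E| = m(q^2+1)(q^3-q)$ by \eqref{basiccount}. Equating the two counts and cancelling the common factor $q^2+1$ gives $|\mathcal{R}| = m(q^3-q)$, as required.
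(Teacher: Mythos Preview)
Your proof is correct and follows exactly the same double-counting argument as the paper's proof. The only addition is your explicit justification that every point on an external line is external (using $t=t'=q$ in the two quadrangles), which the paper leaves implicit; this is a welcome clarification but does not constitute a different approach.
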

 
 \begin{proof}
We double count pairs $(X,\ell)$, where $X$ is an external point and $\ell$ is a line of the relative $m$-cover incident with $X$.
By Equation \eqref{basiccount}, we have $(q^2+1)(q^3-q)\cdot m = |\mathcal{R}|(q^2+1)$, and so $|\mathcal{R}| = m(q^3-q)$.
\end{proof}

 A \textit{spread} of a generalised quadrangle is a set $S$ of lines such that each point is incident with precisely one element of $S$. Given an external line $\ell$, it can be seen that the lines concurrent with $\ell$ and meeting a subquadrangle $\subGQ$ in $q+1$ points form a spread $\mathcal{S}_\ell$ of $\subGQ$. We say that $\mathcal{S}_\ell$ is the spread \textit{subtended} by $\ell$. Furthermore, a spread of $\subGQ$ is \textit{doubly subtended} if it is subtended by two external lines $\ell$ and $\bar{\ell}$, and we then say that $\ell$ and $\bar{\ell}$ are \textit{antipodes}. We say that $\subGQ$ is \textit{doubly subtended} if every 
 subtended spread of $\subGQ$ is doubly subtended.  
Generalised quadrangles of order $(q^2,q)$ with doubly subtended subquadrangles of order $(q,q)$ were first considered by Brown \cite{brown1997generalized}, albeit in the context of the dual. 
Brown \cite[Corollary 2.2]{brown1997generalized} showed that the size of the intersection of two subtended spreads $\mathcal{S}_\ell$ and $\mathcal{S}_n$ is one of the following:
\begin{center}
\begin{tabular}{ll}
\toprule
$q^2+1$ & if $\mathcal{S}_\ell=\mathcal{S}_n$,\\
$1$ & if $\mathcal{S}_\ell\ne \mathcal{S}_n$ and $n$ is concurrent with either $\ell$ or $\bar{\ell}$,\\
$q+1$ & if $\mathcal{S}_\ell\ne \mathcal{S}_n$ and $n$ is not concurrent with $\ell$ nor $\bar{\ell}$.\\
\bottomrule
\end{tabular}
\end{center}
 He further showed that a generalised quadrangle of order $(q^2,q)$ has a doubly subtended subquadrangle if and only if there is an involutory automorphism that fixes the subquadrangle pointwise. This involutory automorphism swaps antipodes $\ell$ and $\bar{\ell}$.

\begin{lemma}
\label{barlemma}
 Let $\ell$ be an external line. Then a line meeting $\ell$ is concurrent with $\bar{\ell}$ if and only if it also meets $\subGQ$.
\end{lemma}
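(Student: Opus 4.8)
The plan is to prove the two directions of the biconditional using the GQ-axiom together with two auxiliary facts. The first fact is that every line of $\GQ$ through a point of $\subGQ$ meets $\subGQ$ in exactly $q+1$ points: since $\GQ$ and $\subGQ$ share the parameter $t=q$, the $q+1$ lines of $\subGQ$ through such a point extend to $q+1$ distinct lines of $\GQ$ through it, hence account for all of them, and each such line meets $\subGQ$ in a full line of $\subGQ$. The second fact is that antipodal external lines are disjoint, i.e. $\ell\cap\bar{\ell}=\emptyset$; I will come back to this. From the first fact and a short double count of pairs (external point, line through it meeting $\subGQ$) one gets that every external point lies on a unique line of $\GQ$ meeting $\subGQ$; in particular, each of the $q^2+1$ points of $\ell$ lies on a line of the subtended spread $\mathcal{S}_\ell$.

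Granting these, here is how the two directions go. Suppose first that a line $n$ meets $\ell$ and also meets $\subGQ$. Then $n\ne\ell$ (as $\ell$ is external, hence disjoint from $\subGQ$), and by the first fact $n$ meets $\subGQ$ in exactly $q+1$ points; being concurrent with $\ell$, it therefore lies in $\mathcal{S}_\ell$, and since $\subGQ$ is doubly subtended we have $\mathcal{S}_\ell=\mathcal{S}_{\bar{\ell}}$, so $n\in\mathcal{S}_{\bar{\ell}}$ and $n$ is concurrent with $\bar{\ell}$. Conversely, suppose $n$ meets $\ell$ at a point $P$ and is concurrent with $\bar{\ell}$. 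Let $t_P$ be the line of $\mathcal{S}_\ell=\mathcal{S}_{\bar{\ell}}$ through $P$, so that $t_P$ meets $\bar{\ell}$ in some point $P'$. Since $\ell\cap\bar{\ell}=\emptyset$, the point $P$ does not lie on $\bar{\ell}$, so by the GQ-axiom $P$ is collinear with a unique point of $\bar{\ell}$, which is $P'$ by way of $t_P$; as $n$ passes through $P$ and is concurrent with $\bar{\ell}$, it must meet $\bar{\ell}$ at $P'$ as well. Thus $n$ and $t_P$ share the two distinct points $P$ and $P'$, forcing $n=t_P$, and $t_P$ meets $\subGQ$.

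I expect the crux to be the disjointness $\ell\cap\bar{\ell}=\emptyset$; indeed the lemma fails without it, since an external line through a point of $\ell\cap\bar{\ell}$ other than $\ell$ and $\bar{\ell}$ would be concurrent with $\bar{\ell}$ yet disjoint from $\subGQ$. I would deduce it from the involutory automorphism $\sigma$ that fixes $\subGQ$ pointwise and swaps $\ell$ with $\bar{\ell}$: a common point $X\in\ell\cap\bar{\ell}$ would satisfy $\sigma(X)\in\ell\cap\bar{\ell}$ as well, so either $\sigma(X)\ne X$, in which case $\ell$ and $\bar{\ell}$ share two points and therefore coincide (contrary to antipodes being distinct), or $\sigma(X)=X$, in which case $X$ is a fixed point of $\sigma$ and hence lies in $\subGQ$, contradicting $X\in\ell$ with $\ell$ external. (That the fixed points of $\sigma$ all lie in $\subGQ$ is part of Brown's analysis of doubly subtended subquadrangles; alternatively, one may quote the disjointness of antipodes directly from \cite{brown1997generalized}.)
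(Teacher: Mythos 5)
Your proof is correct and follows essentially the same route as the paper's: the forward direction via $\mathcal{S}_\ell=\mathcal{S}_{\bar{\ell}}$, and the converse by using the GQ-axiom to force the line through $P$ concurrent with $\bar{\ell}$ to coincide with the spread line of $\mathcal{S}_\ell$ through $P$ (the paper phrases this as preventing a triangle). You are also right that the argument implicitly requires $\ell\cap\bar{\ell}=\emptyset$, a point the paper's proof leaves unstated, and your justification of it via the involution $\sigma$ is sound.
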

\begin{proof}
By definition, $\ell$ and $\lbar$ subtend the same spread of $\subGQ$, which means that every line concurrent with $\ell$ that meets $\subGQ$ must also be concurrent with $\lbar$. As for the `if' implication, suppose $k$ is a line concurrent with both $\ell$ and $\lbar$. Then  the point that is the intersection of $\ell$ and $k$ must have one line on it meeting $\subGQ$, and, by the previous argument, that line must be concurrent with $\lbar$. To prevent the existence of a triangle, $k$ must meet $\subGQ$.
\end{proof}

\begin{lemma}
\label{extlinesint}
Let $\ell$ and $n$ be two nonconcurrent external lines in a generalised quadrangle $\GQ$ of order $(q^2,q)$ containing a doubly subtended subquadrangle $\subGQ$ of order $(q,q)$. Then the number of external lines concurrent with both $\ell$ and $n$ is:
\begin{enumerate}[(a)]
\item $0$ if $n = \bar{\ell}$,
\item $q-2$ if $n$ is concurrent with $\ell$,
\item $q^2-q$ if $n$ is not concurrent with either $\ell$ or $\bar{\ell}$, or
\item $q^2$ if $n$ is concurrent with $\bar{\ell}$.
\end{enumerate}
\end{lemma}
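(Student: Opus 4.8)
The plan is to count all lines meeting both $\ell$ and $n$ and then discard those that happen to lie in $\subGQ$. Recall the standard fact that two nonconcurrent lines $\ell,n$ in a generalised quadrangle of order $(q^2,q)$ have exactly $q^2+1$ common transversals: by the GQ-axiom each of the $q^2+1$ points of $\ell$ is collinear with a unique point of $n$, the resulting joining lines are pairwise distinct (two coinciding ones would make $\ell$ itself a transversal, contradicting $\ell\not\sim n$), and every transversal arises this way. Next, since every point of $\subGQ$ lies on $q+1$ lines of $\subGQ$ and on $q+1$ lines of $\GQ$ in total, every line of $\GQ$ through a point of $\subGQ$ is already a line of $\subGQ$; hence a line of $\GQ$ is external exactly when it is disjoint from the point set of $\subGQ$. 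Consequently a transversal of $\ell$ and $n$ fails to be external precisely when it is a line of $\subGQ$ concurrent with both, that is, a common element of the subtended spreads $\mathcal{S}_\ell$ and $\mathcal{S}_n$; using Lemma~\ref{barlemma} one sees equivalently that such a transversal is external if and only if it is not concurrent with $\bar{\ell}$. In either description, when $\ell$ and $n$ are nonconcurrent the number of external transversals of $\ell$ and $n$ equals $q^2+1-|\mathcal{S}_\ell\cap\mathcal{S}_n|$.

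I would then read off cases (a), (c) and (d) from Brown's intersection table \cite[Corollary~2.2]{brown1997generalized} quoted above, using that a subtended spread of a doubly subtended subquadrangle is subtended only by its two antipodal lines, so $\mathcal{S}_\ell=\mathcal{S}_n$ if and only if $n\in\{\ell,\bar{\ell}\}$. If $n=\bar{\ell}$ (case (a)) then $\mathcal{S}_\ell=\mathcal{S}_n$ has size $q^2+1$, giving $0$; more directly, every transversal of $\ell$ and $\bar{\ell}$ meets $\subGQ$ by Lemma~\ref{barlemma}, so none of them is external. If $n\ne\bar{\ell}$ then $\mathcal{S}_\ell\ne\mathcal{S}_n$, and Brown's table gives $|\mathcal{S}_\ell\cap\mathcal{S}_n|=q+1$ when $n$ is concurrent with neither $\ell$ nor $\bar{\ell}$, whence $q^2-q$ external transversals (case (c)), and $|\mathcal{S}_\ell\cap\mathcal{S}_n|=1$ when $n$ is concurrent with $\bar{\ell}$, whence $q^2$ (case (d)). Here an external line concurrent with $\bar{\ell}$ is automatically nonconcurrent with $\ell$ by Lemma~\ref{barlemma} (applied to $\bar{\ell}$, using $\bar{\bar{\ell}}=\ell$), so the transversal count applies and case (d) is genuinely disjoint from case (b).

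It remains to treat case (b), where $\ell$ and $n$ meet in a point $P$. Any line concurrent with both $\ell$ and $n$ must pass through $P$, since otherwise it would close a triangle with $\ell$ and $n$; so the lines we want are the external lines through $P$ other than $\ell$ and $n$. As $P$ is an external point lying in a subquadrangle of order $(q,q)$ inside a quadrangle of order $(q^2,q)$, it lies on exactly one line of $\subGQ$ (a standard property of such subquadrangles, see \cite{Payne:2009aa}), hence on exactly $q$ external lines, and removing $\ell$ and $n$ leaves $q-2$. The hard part is not really any single computation but the supporting facts: the elementary incidence count identifying the non-external transversals with $\mathcal{S}_\ell\cap\mathcal{S}_n$, and the fact that each external point lies on a unique line of $\subGQ$; once these are in place everything reduces to Brown's table. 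One should also record that $\ell$ and $\bar{\ell}$ are never concurrent — this follows from the involutory automorphism fixing $\subGQ$ pointwise and swapping $\ell$ with $\bar{\ell}$, since it cannot fix a point of the external line $\ell$ — so that the four listed conditions on $n$ are indeed mutually exclusive and exhaustive.
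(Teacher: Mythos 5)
Your proof is correct and follows essentially the same route as the paper: case (b) is handled by counting the external lines through the common point, and the remaining cases by taking the $q^2+1$ transversals guaranteed by the GQ-axiom and subtracting $|\mathcal{S}_\ell\cap\mathcal{S}_n|$ as read off from Brown's table. The extra justifications you supply (external lines are disjoint from $\subGQ$, each external point lies on a unique line of $\subGQ$, $\ell\not\sim\bar{\ell}$) are facts the paper uses implicitly, so no further comment is needed.
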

\begin{proof}
Suppose $n$ is concurrent with $\ell$ and that $P$ is their point of intersection. To prevent the existence of a triangle, the only lines that can be concurrent with both $\ell$ and $n$ (but not equal to either) are the $q-2$ remaining external lines that are incident with $P$.  Now suppose $n$ is not concurrent with or equal to $\ell$. By the `GQ-axiom', $\ell$ and $n$ are concurrent with $q^2+1$ lines together in $\GQ$. We must determine how many of these are external lines. The lines concurrent with both $\ell$ and $n$ and meeting $\subGQ$ are exactly the lines in the intersection of the spreads subtended by $\ell$ and $n$. Therefore, if $\ell$ and $n$ subtend the same spread (that is, $n= \bar{\ell}$), then there must be zero external lines concurrent with both. If the intersection of the spreads subtended by $\ell$ and $n$ has size $q+1$ (that is, $n$ is concurrent with neither $\ell$ or $\bar{\ell}$), then $\ell$ and $n$ are concurrent with $q^2-q$ external lines together. Finally, if $n$ is concurrent with $\bar{\ell}$, which means that size of the intersection of their subtended spreads is equal to one, then $\ell$ and $n$ must be concurrent with $q^2$ external lines together.
\end{proof}

For an overview of the theory of association schemes, see \cite{Lint:2001aa}.
Penttila and Williford \cite{penttila2011new} showed that there is an association scheme on the external points of a generalised quadrangle of order $(q,q^2)$ with a doubly subtended subquadrangle of order $(q,q)$ which arises from considering the intersection of subtended ovoids. Since we are interested in $\herm(3,q^2)$, we state the association scheme in terms of the dual generalised quadrangle, which means we now have an association scheme on external lines. The original statement of the following result stipulated that $q$ is a power of $2$. However, it can be
readily deduced from its proof that the result does not need this requirement.
\begin{theorem}[Penttila and Williford {\cite[Theorem 1]{penttila2011new}}]
\label{extlinesAS}
Let $\GQ$ be a generalised quadrangle of order $(q^2,q)$ containing a doubly subtended generalised quadrangle $\subGQ$ of order $(q,q)$. Let $\mathcal{L}_E$ be the set of external lines of $\GQ$ relative to $\subGQ$. Then the following set of relations on $\mathcal{L}_E$, together with the equality relation $\Lambda_0$, form a cometric association scheme on $\mathcal{L}_E$.
\begin{center}
\begin{tabular}{ll}
\toprule
\textsc{Relation} & \textsc{Description}\\
\midrule
$(\ell,n)\in \Lambda_1$ &$\ell$ and $n$ are not concurrent and $n$ is concurrent with $\bar{\ell}$.\\
$(\ell,n)\in \Lambda_2$ & $\ell$ and $n$ are not concurrent and $n$ is not concurrent with $\bar{\ell}$.\\
$(\ell,n)\in \Lambda_3$ & $\ell$ and $n$ are concurrent.\\
 $(\ell,n)\in \Lambda_4$ & $n = \bar{\ell}$. \\
\bottomrule
\end{tabular}
\end{center}

\end{theorem}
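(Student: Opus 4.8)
\emph{Proof proposal.} The plan is to build the Bose--Mesner algebra of the putative scheme directly and to show that it closes, exploiting the involution $\sigma$ that fixes $\subGQ$ pointwise and swaps each external line with its antipode. Write $A_0,\dots,A_4$ for the $0$--$1$ adjacency matrices of $\Lambda_0,\dots,\Lambda_4$, so that $A_0=I$ and $A_0+\dots+A_4=J$. First I would dispose of the easy axioms: the five relations partition $\mathcal L_E\times\mathcal L_E$ (the one subtlety is that $\ell$ and $\lbar$ are never concurrent, so the antipodal pair falls under $\Lambda_4$ and not $\Lambda_3$, and one reads $\Lambda_2$ as excluding it), and each $\Lambda_i$ is symmetric. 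Symmetry of $\Lambda_1$ and $\Lambda_2$ is where $\sigma$ enters: since $\sigma$ is a collineation sending $\ell\mapsto\lbar$ and preserving concurrency, $n$ is concurrent with $\lbar$ if and only if $\ell$ is concurrent with $\bar n$, which is exactly what symmetry of these relations requires.

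The heart of the matter is that the algebra spanned by $A_0,\dots,A_4$ is closed under multiplication. I would extract this from two facts. The first is that $A_4$, being the permutation matrix of the involution $\sigma$, satisfies $A_4^2=I$ and induces a permutation of the relations; a short case analysis using Lemma~\ref{barlemma} shows that $A_4A_3=A_3A_4=A_1$, that $A_4A_2=A_2$, and that $A_4$ interchanges $A_0$ and $A_4$, so that $A_1$ and $A_2$ are expressible through $A_3,A_4,I,J$ via $A_1=A_4A_3$ and $A_2=J-I-A_3-A_4-A_4A_3$. The second, and only substantial, ingredient is the single relation
\begin{equation*}
A_3^2=(q^2+1)(q-1)\,A_0+q^2A_1+(q^2-q)A_2+(q-2)A_3,
\end{equation*}
whose coefficients are precisely the counts supplied by Lemma~\ref{extlinesint} (the diagonal coefficient being the valency of $\Lambda_3$). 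Since $A_4$ commutes with $A_3$, these relations let me reduce every product $A_iA_j$ to the span of $A_0,\dots,A_4$, so the span is an algebra. As the $A_i$ are linearly independent $0$--$1$ matrices, the coefficients occurring are forced to be the intersection numbers $p_{ij}^k$; in particular they are constant, and $\{\Lambda_0,\dots,\Lambda_4\}$ is a symmetric (hence commutative) association scheme.

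It then remains to prove the scheme is cometric, and this is the step I expect to be the main obstacle, since it is not formal and needs the explicit spectrum. Because $A_3$ and $A_4$ commute, they are simultaneously diagonalisable, and $A_4$ splits the space into its $(+1)$- and $(-1)$-eigenspaces. On each of these the displayed relation collapses (using $A_1=A_4A_3$ and the expression for $A_2$) to a quadratic equation for the eigenvalue $\theta$ of $A_3$; solving it one finds that $A_3$ has eigenvalues $q-1$ and $-(q-1)^2$ on the $(+1)$-space and $q-1$ and $-(q^2+1)$ on the $(-1)$-space, alongside the valency $(q^2+1)(q-1)$ on the all-ones vector. These five joint eigenvalues of $(A_3,A_4)$ give the five primitive idempotents $E_0,\dots,E_4$ and, together with $A_1=A_4A_3$ and $A_2=J-I-A_3-A_4-A_1$, the full eigenmatrix $P$. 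Inverting $P$ yields the dual eigenmatrix $Q$, and the proof concludes by checking that there is an ordering of $E_0,\dots,E_4$ for which the Krein parameters $q_{ij}^k$ vanish outside $|i-j|\le k\le i+j$ (with the relevant boundary parameters nonzero); equivalently, that the columns of $Q$ are polynomials of the correct degrees in its second column. The antipodal splitting by $A_4$ organises the idempotents into the two families coming from the $(\pm1)$-eigenspaces and keeps this Krein computation manageable.
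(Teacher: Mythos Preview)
The paper does not supply its own proof of this theorem: it is quoted verbatim from Penttila and Williford \cite{penttila2011new} (in the dual formulation), with the single added remark that the hypothesis ``$q$ even'' in the original can be dropped. So there is nothing in the present paper to compare your argument against; the $Q$-matrix in \eqref{Q} is imported from the same source.

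That said, your outline is sound and would stand on its own. The reduction of the Bose--Mesner algebra to the two generators $A_3$ and $A_4$ via $A_1=A_4A_3$ and $A_2=J-I-A_3-A_4-A_4A_3$ is correct (Lemma~\ref{barlemma} does give that an external line concurrent with $\bar\ell$ cannot be concurrent with $\ell$), and Lemma~\ref{extlinesint} is precisely the content of the relation $A_3^2=(q^2+1)(q-1)A_0+q^2A_1+(q^2-q)A_2+(q-2)A_3$. Your eigenvalue computation checks out: on the $(+1)$-eigenspace of $A_4$ (orthogonal to $j$) one finds $A_3^2+(q-1)(q-2)A_3-(q-1)^3I=0$, giving $\theta\in\{q-1,\,-(q-1)^2\}$, and on the $(-1)$-eigenspace $A_3^2+(q^2-q+2)A_3-(q^2+1)(q-1)I=0$, giving $\theta\in\{q-1,\,-(q^2+1)\}$. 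These joint spectra, together with the valencies, do reproduce the matrix $Q$ displayed in \eqref{Q}.

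The only part you flag as an obstacle, the cometric verification, is genuinely where the work sits; once you have $Q$ explicitly it is a mechanical (if unpleasant) Krein-parameter check, and the $Q$-antipodal structure coming from the $\pm1$-splitting by $A_4$ does organise it. One small caution on axioms: as you note, the description of $\Lambda_2$ in the statement must be read as excluding both $n=\ell$ and $n=\bar\ell$, since $(\ell,\bar\ell)$ literally satisfies ``not concurrent with $\ell$ and not concurrent with $\bar\ell$''. It would be worth saying this explicitly rather than parenthetically.
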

The unique basis of minimal idempotents $\{E_i \mid 0 \leqslant i \leqslant 4\}$ of the associated Bose--Mesner algebra $\mathcal{A}$ can be constructed once we know the
dual eigenmatrix $Q$ of the association scheme which we give below (see also \cite[p.~505]{penttila2011new}).
Each of these minimal idempotents projects onto a simultaneous eigenspace of $\mathcal{A}$, and together these give a decomposition 
$\mathbb{C}^{\mathcal{L}_E} = V_0 \perp \dots \perp V_4$ of the vector space over $\mathbb{C}$ with the set of external lines $\mathcal{L}_E$ as a basis. Note that $V_0$ is generated by the all-ones vector $j$.

\begin{equation}
\label{Q}
Q = 
\begin{bmatrix}
1 & \frac{q(q-1)^2}{2} & \frac{(q-2)(q+1)(q^2+1)}{2} & \frac{q(q-1)(q^2+1)}{2} & \frac{q(q^2+1)}{2} \\[1em]
1 & \frac{q(q-1)}{2}& \frac{(q-2)(q+1)}{2} & \frac{-q(q-1)}{2} & \frac{-q(q-1)}{2}\\[1em]
1 & 0 & -(q+1) & 0 & q \\[1em]
1 & \frac{-q(q-1)}{2}& \frac{(q-2)(q+1)}{2} & \frac{q(q-1)}{2} & \frac{-q(q-1)}{2}\\[1em]
1 & \frac{-q(q-1)^2}{2} & \frac{(q-2)(q+1)(q^2+1)}{2} & \frac{-q(q-1)(q^2+1)}{2} & \frac{q(q^2+1)}{2} \\
\end{bmatrix}.
\end{equation}

\section{Proof of Main Theorem}
Let $\Gamma= \herm(3,q^2)$, for some prime-power $q$, and let $\subGQ = \symp(3,q)$.
Suppose $\mathcal{S}$ is a set of lines of $\GQ$. Define $\mathcal{S}^{\perp_E}$ to be the set of external lines that are concurrent with every member of $\mathcal{S}$.
We denote the characteristic vector of $\mathcal{S}$ in $\mathcal{L}_E$ by $\chi_\mathcal{S}$.
Let $[P]$ denote the set of lines incident with a particular point $P$, with $\chi_{[P]}$ being its characteristic vector. If $\mathcal{R}$ is a relative $m$-cover 
then the scalar product $\chi_{[P]} \cdot \chi_{\mathcal{R}}$ is equal to $m$, by the definition of a relative $m$-cover, and by Lemma \ref{RmCsize}.
Therefore, we have:
\[
 \big( (q^3-q) \chi_{[P]} - j\big) \cdot \chi_{\mathcal{R}} 
 = m(q^3-q) - m(q^3-q)
 = 0.
\]
Also, recalling the association scheme on external lines given in Theorem \ref{extlinesAS}, and the associated adjacency and minimal idempotent matrices, we have
\begin{align*}
 \left( (q^3-q) \chi_{[P]} - j\right) E_0 & =  \frac{1}{q^2(q^2-1)}\left( (q^3-q) \chi_{[P]} - j\right) J \\
& = \frac{(q^3-q)}{q^2(q^2-1)}\ (qj)  - \frac{q^2(q^2-1)}{q^2(q^2-1)}j\\
& = 0.
\end{align*}
By \eqref{Q}, we can express each of the other $E_i$ matrices in terms of adjacency matrices as $E_i=\frac{1}{|\mathcal{L}_E|}\sum_{j=0}^{4}Q_{ji}A_j$, where $Q_{ji}$ is the $(j,i)$-entry of $Q$, or in more detail:
{\small
\begin{align*}
\numberthis\label{E1}
E_1 &= \frac{1}{|\mathcal{L}_E|} \left(\frac{q(q-1)^2}{2} \mathrm{I} + \frac{q(q-1)}{2} A_1  - 
\frac{q(q-1)}{2} A_3 - \frac{q(q-1)^2}{2} A_4\right), \\
E_2 &= \frac{1}{|\mathcal{L}_E|} \left(\frac{(q-2)(q+1)(q^2+1)}{2} \mathrm{I} + \frac{(q-2)(q+1)}{2} A_1  - (q+1)A_2 \right.\\
&\qquad \left. +\, \frac{(q-2)(q+1)}{2} A_3 + \frac{(q-2)(q+1)(q^2+1)}{2} A_4 \right),\numberthis\label{E2} \\
E_3 &= \frac{1}{|\mathcal{L}_E|} \left(\frac{q(q-1)(q^2+1)}{2} \mathrm{I} - \frac{q(q-1)}{2} A_1 +
\frac{q(q-1)}{2} A_3 - \frac{q(q-1)(q^2+1)}{2} A_4\right),\numberthis\label{E3} \\
E_4&= \frac{1}{|\mathcal{L}_E|} \left(\frac{q(q^2+1)}{2} \mathrm{I} - \frac{q(q-1)}{2} A_1 + qA_2-
\frac{q(q-1)}{2} A_3 + \frac{q(q^2+1)}{2} A_4\right). \numberthis\label{E4} 
\end{align*}}

Let $\Gamma$ be a generalised quadrangle of order $(q^2,q)$ with a doubly subtended subquadrangle $\Gamma'$ of order $(q,q)$.
Let $\sigma$ be the involutory automorphism of $\Gamma$ fixing $\Gamma'$ pointwise.
The following result shows that the \emph{dual degree set} of $\chi_{[P]}$, where $P$ is an external point,
is $\{1\}$.

\begin{proposition}
\label{chipinspan}
Let $P$ be an external point. Then $\chi_{[P]} E_1 = 0$ and $\chi_{[P]} E_i \neq 0$ for $i \in \{2,3,4\}$.
Hence, $\chi_{[P]}  \in V_0\perp V_2\perp V_3\perp V_4$.
\end{proposition}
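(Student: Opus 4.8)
The plan is to turn each of the five statements into a single scalar evaluation. Because the relations of the scheme of Theorem~\ref{extlinesAS} are symmetric, each minimal idempotent $E_i$ is a real symmetric matrix with $E_i^2=E_i$, so that $\|\chi_{[P]}E_i\|^2=\chi_{[P]}E_iE_i^{\top}\chi_{[P]}^{\top}=\chi_{[P]}E_i\chi_{[P]}^{\top}$; hence $\chi_{[P]}E_i=0$ precisely when $\chi_{[P]}E_i\chi_{[P]}^{\top}=0$, and $\chi_{[P]}E_i\neq 0$ precisely when this number is strictly positive. By \eqref{Q} we can write $E_i=\tfrac{1}{|\mathcal{L}_E|}\sum_{j=0}^{4}Q_{ji}A_j$ (with $A_0=\mathrm{I}$ and $A_1,\dots,A_4$ the adjacency matrices of the relations $\Lambda_1,\dots,\Lambda_4$), so that
\[
\chi_{[P]}E_i\chi_{[P]}^{\top}=\frac{1}{|\mathcal{L}_E|}\sum_{j=0}^{4}Q_{ji}\bigl(\chi_{[P]}A_j\chi_{[P]}^{\top}\bigr),
\]
where $\chi_{[P]}A_j\chi_{[P]}^{\top}$ is the number of ordered pairs of external lines through $P$ lying in relation $\Lambda_j$ (with $\Lambda_0$ the equality relation). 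Everything therefore reduces to a short count inside the pencil $[P]$.

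Recall, as already used in the calculation preceding the proposition, that $\chi_{[P]}J=qj$; that is, $\chi_{[P]}$ is the $0/1$ characteristic vector of the set of exactly $q$ external lines through $P$. Any two distinct external lines through $P$ meet at $P$ and hence are concurrent; moreover they cannot be antipodes, because an external line and its antipode are never concurrent --- if a line $k$ were concurrent with both $\ell$ and $\lbar$, then $k$ would have to meet $\subGQ$ by Lemma~\ref{barlemma}, which is impossible for the external line $k=\lbar$. So every ordered pair of \emph{distinct} external lines through $P$ lies in $\Lambda_3$, and none lies in $\Lambda_1$, $\Lambda_2$ or $\Lambda_4$. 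Consequently $\chi_{[P]}A_0\chi_{[P]}^{\top}=q$, $\chi_{[P]}A_3\chi_{[P]}^{\top}=q(q-1)$, and $\chi_{[P]}A_j\chi_{[P]}^{\top}=0$ for $j\in\{1,2,4\}$, and therefore
\[
\chi_{[P]}E_i\chi_{[P]}^{\top}=\frac{q}{|\mathcal{L}_E|}\bigl(Q_{0i}+(q-1)Q_{3i}\bigr),\qquad 1\le i\le 4.
\]

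It now suffices to substitute the first and fourth rows of $Q$ from \eqref{Q}. For $i=1$ one gets $Q_{01}+(q-1)Q_{31}=\tfrac{q(q-1)^2}{2}-(q-1)\cdot\tfrac{q(q-1)}{2}=0$, so $\chi_{[P]}E_1=0$. For $i=2,3,4$ the quantities $Q_{0i}+(q-1)Q_{3i}$ simplify to $\tfrac{q(q-2)(q+1)^2}{2}$, $\tfrac{q^2(q^2-1)}{2}$ and $q^2$ respectively, each strictly positive for $q\ge 3$, so $\chi_{[P]}E_i\neq 0$ for $i\in\{2,3,4\}$. Since $\mathbb{C}^{\mathcal{L}_E}=V_0\perp\dots\perp V_4$ and $\chi_{[P]}=\sum_{i=0}^{4}\chi_{[P]}E_i$, the vanishing of $\chi_{[P]}E_1$ says precisely that the $V_1$-component of $\chi_{[P]}$ is zero, i.e.\ $\chi_{[P]}\in V_0\perp V_2\perp V_3\perp V_4$. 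The only step carrying geometric content is the identification of $\Lambda_3$ for pairs in $[P]$ (and the non-concurrency of a line with its antipode); the remainder is bookkeeping with \eqref{Q} and the idempotence of the $E_i$. (For the degenerate value $q=2$ the relation $\Lambda_2$ is empty and $V_2=0$, so the assertion $\chi_{[P]}E_2\neq 0$ is vacuous there; as a nontrivial relative $m$-cover with $q=2$ forces $m=1=q/2$ immediately, nothing is lost by assuming $q\ge 3$.)
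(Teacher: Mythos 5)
Your proof is correct, and it takes a genuinely different and lighter route than the paper's. The paper computes the full vectors $\chi_{[P]}A_j$ explicitly in terms of the set $\{j,\chi_{[P]},\chi_W,\chi_{[\bar{P}]}\}$ (which requires introducing the auxiliary set $W$ attached to the unique line on $P$ meeting $\subGQ$) and then reads off all the projections from the product with $Q$; you instead exploit the fact that each $E_i$ is a real symmetric idempotent, so that $\chi_{[P]}E_i=0$ if and only if the quadratic form $\chi_{[P]}E_i\chi_{[P]}^{\top}$ vanishes, and this quadratic form only sees the restriction of the scheme to the $q$ external lines of the pencil $[P]$, where every ordered pair of distinct lines lies in $\Lambda_3$. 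This collapses the whole proposition to the scalar $Q_{0i}+(q-1)Q_{3i}$, and your arithmetic with \eqref{Q} checks out. Two remarks. First, your justification that an external line is never concurrent with its antipode is not right as written: applying Lemma \ref{barlemma} with $k=\bar{\ell}$ only yields the vacuous statement that $\bar{\ell}$ is not concurrent with itself. A clean argument: if $\ell$ and $\bar{\ell}$ met at a point $X$, take $Y\neq X$ on $\ell$ and the unique line $m_Y$ on $Y$ meeting $\subGQ$; since $m_Y$ lies in the common subtended spread it is concurrent with $\bar{\ell}$ at some point $Z\notin\{X,Y\}$, and $\ell$, $m_Y$, $\bar{\ell}$ form a triangle. (Alternatively, Lemma \ref{extlinesint} already lists $n=\bar{\ell}$ among the nonconcurrent cases.) Second, your $q=2$ caveat is a genuine, if harmless, point that the paper glosses over: for $q=2$ one has $Q_{02}=0$, so $E_2=0$ and the assertion $\chi_{[P]}E_2\neq 0$ fails; but only the vanishing $\chi_{[P]}E_1=0$ is used downstream, so nothing is affected. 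What your approach gives up is the explicit form of the nonzero projections $\chi_{[P]}E_i$, which the paper computes but never actually needs.
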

 \begin{proof}
Let $\bar{P} = P^\sigma$ and define $W$ be the set of external lines concurrent with the line on $P$ meeting the doubly subtended quadrangle $\subGQ$, but not incident with $P$ or $\bar{P}$. Since $\sigma$ is an automorphism,  $P$ is incident with  $\ell $ if and only if $ \bar{P}$ is incident with  $\bar{\ell}$.
 Now, $\chi_{[{P}]} \mathrm{I} = \chi_{[{P}]}$ trivially, and $\chi_{[{P}]} A_4 = \chi_{[\bar{P}]}$ because the $i$th position in the resulting vector will be 1 if and only if the image of the corresponding external line under $\sigma$ is incident with $P$.
Let us now calculate $ \chi_{[P]} A_3$. This is equivalent to counting how many external lines on $P$ are concurrent with each of the other external lines. If $\ell$ is an external line incident with $P$, then the other $q-1$ external lines on $P$ are concurrent with it and so the corresponding positions in $ \chi_{[P]} A_3$ will have value $q-1$. If $\ell$ is an external line not incident with $P$, then, by the `GQ-axiom', either $\ell$ is concurrent with an external line on $P$, in which case the corresponding value in $ \chi_{[P]} A_3$ will be 1, or $\ell$ is concurrent with the line on $P$ meeting the subquadrangle $\subGQ$, and so contributes 0 to $ \chi_{[P]} A_3$. Summarising, we have
$$\chi_{[P]} A_3 = (q-1)\chi_{[P]} + (j-\chi_{[P]} - \chi_W-\chi_{[\bar{P}]}) = j + (q-2)\chi_{[P]} - \chi_W-\chi_{[\bar{P}]}.$$
We now calculate $ \chi_{[P]} A_1$. This is equivalent to counting how many external lines on $P$ are concurrent with the image of each of the other external lines under $\sigma$. Since $\sigma$ preserves incidence, if $\ell$ is a line incident with $P$, then an external line $\bar{n}$ is concurrent with $\ell$ if and only if $\bar{\ell}$ is concurrent with $n$. Therefore, we may equivalently count how many lines incident with $\bar{P}$ are concurrent with each external line $n$, which is the same as calculating $\chi_{[\bar{P}]} A_3$. 
Therefore,
 $$\chi_{[P]} A_1 = (q-1)\chi_{[\bar{P}]} + (j-\chi_{[P]}-\chi_W-\chi_{[\bar{P}]}) = (q-2)\chi_{[\bar{P}]} + j -\chi_W - \chi_{[P]}.$$
We will now calculate $\chi_{[P]} A_2$, using the fact that the sum of the adjacency matrices is the all-ones matrix $J$:
\begin{align*}
 \chi_{[P]} A_2 &= \chi_{[P]} J - \left( \chi_{[P]} I + \chi_{[P]} A_1 + \chi_{[P]} A_3 + \chi_{[P]} A_4 \right)\\
& = qj - \Big( \chi_{[P]} + (q-2)\chi_{[\bar{P}]} + j -\chi_W - \chi_{[P]} + j \\ 
& \qquad + (q-2)\chi_{[P]} - \chi_W-\chi_{[\bar{P}]}+ \chi_{[\bar{P}]}  \Big)\\
& = (q-2)j - (q-2)(\chi_{[\bar{P}]} + \chi_{[P]}) + 2 \chi_W.
\end{align*}

With respect to the set $\{j,\chi_{[P]},\chi_W,\chi_{[\bar{P}]}\} $ we can summarise our calculations as follows:

\begin{align*}
\left(\chi_{[P]} A_i \right) &=
(\chi_{[P]},j,\chi_W,\chi_{[\bar{P}]})
\begin{bmatrix}
 1 & -1 & -(q-2) & q-2 & 0 \\
 0 & 1 & q-2 & 1 & 0 \\
 0 & -1 & 2 & -1 & 0 \\
 0 & q-2 & -(q-2) & -1 & 1 \\
\end{bmatrix}.
\end{align*}
Therefore,
{\footnotesize
\begin{align*}
\frac{1}{|\mathcal{L}_E|}( \chi_{[P]} A_i )Q&=\frac{1}{q^2(q^2-1)}(\chi_{[P]},j,\chi_W,\chi_{[\bar{P}]})
\begin{bmatrix}
 1 & -1 & -(q-2) & q-2 & 0 \\
 0 & 1 & q-2 & 1 & 0 \\
 0 & -1 & 2 & -1 & 0 \\
 0 & q-2 & -(q-2) & -1 & 1 \\
\end{bmatrix}Q\\
&=\frac{(\chi_{[P]},j,\chi_W,\chi_{[\bar{P}]})}{q^2(q^2-1)}
\begin{bmatrix}
0 & 0 & \tfrac{1}{2}q(q-2)(q+1)^2 & \tfrac{1}{2}q^2(q^2-1) & q(q+1)\\
q & 0 & 0 & 0 & -q \\
0 & 0 & -q(q+1) & 0 & q(q+1) \\
0 & 0 & \tfrac{1}{2}q(q-2)(q+1)^2 & -\tfrac{1}{2} q^2(q^2-1) & q(q+1)\\
\end{bmatrix}
\end{align*}
}
Note that only the second column of the matrix above is zero, and so $\chi_{[P]} E_1 = 0$ and $\chi_{[P]} E_i \neq 0$ for $i \in \{2,3,4\}$.
\end{proof}

\begin{theorem}
\label{chipsspan}
Let $\chi_{[P]}$ be the characteristic vector of the set of external lines incident with an external point $P$. Then the set of vectors $\{ \chi_{[P]} \mid P \in \mathcal{P}_E\}$ is a spanning set of $V_0\perp V_2 \perp V_3 \perp V_4$.
\end{theorem}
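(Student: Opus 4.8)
The plan is to identify $U:=\langle\,\chi_{[P]}\mid P\in\mathcal{P}_E\,\rangle$ with $V_0\perp V_2\perp V_3\perp V_4$. By Proposition~\ref{chipinspan} we already know $U\subseteq V_0\perp V_2\perp V_3\perp V_4$, so it suffices to match dimensions. Let $N$ be the $\mathcal{P}_E\times\mathcal{L}_E$ incidence matrix of external points against external lines; its rows are exactly the vectors $\chi_{[P]}$, so $U$ is the row space of $N$ and $\dim U=\operatorname{rank}N=|\mathcal{L}_E|-\dim\ker N$. Since $(Nv)_P=\chi_{[P]}\cdot v$ for each external point $P$, we have $\ker N=U^{\perp}$. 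Hence the whole theorem reduces to the claim $U^{\perp}=V_1$, which gives $\dim U=|\mathcal{L}_E|-\dim V_1=\dim(V_0\perp V_2\perp V_3\perp V_4)$.

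The inclusion $V_1\subseteq U^{\perp}$ is immediate from Proposition~\ref{chipinspan}, since every $\chi_{[P]}$ lies in $V_0\perp V_2\perp V_3\perp V_4=V_1^{\perp}$. For the reverse inclusion I would compute $N^{\top}N$. The key geometric fact is that an external line is disjoint from $\subGQ$: a point of $\subGQ$ lies on $q+1$ lines of $\subGQ$ and on $q+1$ lines of $\GQ$, and as every line of $\subGQ$ is a line of $\GQ$ these two pencils coincide, so a line of $\GQ$ through a point of $\subGQ$ is itself a line of $\subGQ$. Hence an external line carries only external points, so the diagonal entries of $N^{\top}N$ are $q^2+1$, while the $(\ell,n)$-entry for distinct external lines is $1$ or $0$ according as $\ell,n$ are concurrent or not. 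Therefore $N^{\top}N=(q^2+1)\,\mathrm{I}+A_3$, an element of the Bose--Mesner algebra $\mathcal{A}$.

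It remains to check that this operator is singular precisely on $V_1$. Its eigenvalue on $V_i$ equals $q^2+1+\theta_i$, where $\theta_i$ is the eigenvalue of $A_3$ on $V_i$, and the latter is obtained from the dual eigenmatrix $Q$ of~\eqref{Q} via the standard duality $\theta_i=k_3\,Q_{3i}/m_i$, with multiplicities $m_i=Q_{0i}$ and valency $k_3=(q-1)(q^2+1)$. (For $k_3$: of the $q(q^2+1)$ lines of $\GQ$ concurrent with a fixed external line $\ell$, exactly the $q^2+1$ lines of the subtended spread $\mathcal{S}_\ell$ lie in $\subGQ$ --- the rest being external by the disjointness fact above --- leaving $(q-1)(q^2+1)$ external lines concurrent with $\ell$.) This yields $(\theta_0,\dots,\theta_4)=\bigl((q-1)(q^2+1),\,-(q^2+1),\,q-1,\,q-1,\,-(q-1)^2\bigr)$, so $N^{\top}N$ acts on $V_0,\dots,V_4$ as the scalars $q(q^2+1)$, $0$, $q(q+1)$, $q(q+1)$, $2q$. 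Thus $N^{\top}N$ annihilates $V_1$ and is nonsingular on each of $V_0,V_2,V_3,V_4$. Consequently, if $v\in U^{\perp}=\ker N$ then $N^{\top}Nv=0$ forces the $V_i$-component of $v$ to be zero for $i\in\{0,2,3,4\}$, so $v\in V_1$; hence $U^{\perp}=V_1$ and the proof is complete. (When $q=2$ the relation $\Lambda_2$ is empty and $V_2=0$, and the argument is unchanged; since the $E_i$ are real symmetric, all of this may be carried out over $\mathbb{R}$.)

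The main obstacle is really the bookkeeping of the previous paragraph: pinning down $N^{\top}N=(q^2+1)\,\mathrm{I}+A_3$ (which rests on external lines avoiding $\subGQ$) and the valency $k_3$, and then invoking the $P$--$Q$ duality with the correct normalisation; the ensuing eigenvalue computation --- conveniently cross-checked by the positive semidefiniteness of $N^{\top}N$, which forces all five scalars to be nonnegative --- and the final dimension count are routine.
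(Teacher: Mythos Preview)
Your proof is correct and takes a genuinely cleaner route than the paper's. Both arguments begin the same way: use Proposition~\ref{chipinspan} for the containment $U\subseteq V_0\perp V_2\perp V_3\perp V_4$, form the incidence matrix $N$ (the paper calls it $A$), and pass to the Gram matrix $M=N^{\top}N=(q^2+1)\mathrm{I}+A_3$. From here the paper proceeds by hand: it uses Lemma~\ref{extlinesint} to show, via an explicit sum over the pencil of lines concurrent with $\ell$, that $\chi_\ell+\chi_{\bar\ell}$ lies in the row space of $M$, and then checks one by one that each $\chi_\ell E_i$ (for $i=0,2,3,4$) can be rewritten as a combination of $M_\ell$, $M_{\bar\ell}$, $j$, and $\chi_\ell+\chi_{\bar\ell}$. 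You instead exploit the single observation that $M$ already lies in the Bose--Mesner algebra, so its null space is a sum of eigenspaces; the eigenvalue of $A_3$ on $V_i$ is read off from the given $Q$-matrix via $P_{i3}=k_3Q_{3i}/m_i$, giving $(q^2+1)+\theta_i=q(q^2+1),\,0,\,q(q+1),\,q(q+1),\,2q$ and hence $\ker M=V_1$ immediately. Your approach bypasses Lemma~\ref{extlinesint} and the row-space bookkeeping entirely; what the paper's longer computation buys is that it never needs the $P$--$Q$ orthogonality relation and produces, as a by-product, explicit expressions for $\chi_\ell E_i$ that could in principle be reused elsewhere.
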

\begin{proof}
By Proposition \ref{chipinspan}, $\chi_{[P]} \in V_0\perp V_2 \perp V_3 \perp V_4 $. Let $A$ be the matrix whose rows are the $\chi_{[P]}$ vectors. To prove that $\{ \chi_{[P]} \mid P \in \mathcal{P}_E\}$ spans  $V_0\perp V_2 \perp V_3 \perp V_4 $, it is sufficient to show that the rank of $A$ is equal to  $\mathrm{dim}(V_0\perp V_2 \perp V_3 \perp V_4) = |\mathcal{L}_E| - \mathrm{dim}(V_1)$. 
Consider the matrix $M=A^TA$. The rows of $M$ correspond to counting how many points incident with a particular external line are incident with each of the other external lines. The diagonal entries of $M$ are each equal to $q^2+1$, the entries whose row and column correspond to concurrent lines are equal to 1, and the entries are zero otherwise. In particular, the rows of $M$ are of the form $(q^2+1)\chi_\ell + \chi_{\{\ell\}^{\perp_E}}$ for some external line $\ell$. Denote the row of $M$ corresponding to the external line $\ell$ by $M_\ell$, and recall that the size of $\{\ell\}^{\perp_E}$ is $(q-1)(q^2+1)$, since each of the points of $\ell$ is incident with $q-1$ external lines not equal to $\ell$. Note that $\mathrm{rank}(M) \leqslant \mathrm{rank}(A)$, since $M$ is constructed by taking linear combinations of the rows of $A$. Also notice that $M$ has constant column sum equal to $q(q^2+1)$, and so the all-ones vector $j$ lies in the row space of $M$. We first show that $\chi_\ell + \chi_{\bar{\ell}}$ is in the row space of $M$. Define the set 
\[
U_{\ell} := \{ M_\ell\} \cup \{ M_{\ell'} \mid \ell' \text{ is concurrent with } \ell \}.
\]
Then the sum of the vectors in $U_\ell$ is 
\begin{align*}
&(q^2+1)\chi_\ell + \chi_{\{\ell\}^{\perp_E}} + (q^2+1)\chi_{\{\ell\}^{\perp_E}} + (q-1)(q^2+1)\chi_\ell \\
& \quad + (q-2)\chi_{\{\ell\}^{\perp_E}} + q^2\chi_{\{\bar{\ell}\}^{\perp_E}} + (q^2-q)\chi_{\{\ell, \bar{\ell} \}^{\perp_2}},
\end{align*}
where $\chi_{\{\ell, \bar{\ell} \}^{\perp_2}} $ is the set of lines that are neither concurrent with nor equal to $\ell$ or $\bar{\ell}$.
The first two terms originate from $M_\ell$. The third term arises because $M_{\ell'}$ contributes $(q^2+1)\chi_{\ell'}$ for every $\ell' \in \{\ell\}^{\perp_E}$. Every line of $\{\ell\}^{\perp_E}$ is concurrent with $\ell$ by definition, and this is how the fourth term arises. The fifth term comes about because every element of $\{\ell\}^{\perp_E}$ is also concurrent with the $q-2$ elements of $\{\ell\}^{\perp_E}$ that lie on the same point of intersection with $\ell$ (and no other elements of $\{\ell\}^{\perp_E}$). The sixth and seventh terms arise because for every external line $n$ not equal to $\ell$ or $\bar{\ell}$ and not concurrent with $\ell$, Lemma \ref{extlinesint} implies that the size of $\left\{ \ell, n \right\}^{\perp_E}$ is $q^2$ if $n$ is concurrent with $\bar{\ell}$, and $q^2-q$ otherwise. Simplifying slightly, we write the sum as
\[
q(q^2+1)\chi_\ell+ (q^2+q)\chi_{\ell^{\perp_E}} + q^2\chi_{\bar{\ell}^{\perp_E}} + (q^2-q)\chi_{\{\ell, \bar{\ell} \}^{\perp_2}} .
\]
Consider the sum of all of the vectors in $U_{\ell} \cup U_{\bar{\ell}}$. After some simplification, we arrive at the expression
\begin{equation}
\label{eq1}
q(q^2+1)(\chi_\ell+ \chi_{\bar{\ell}}) + (2q^2+q)(\chi_{\ell^{\perp_E}}+ \chi_{\bar{\ell}^{\perp_E}}) + (2q^2-2q)\chi_{\{\ell, \bar{\ell} \}^{\perp_2}}.
\end{equation}
Recall that $j$ is in the row space of $M$. We calculate the difference between \eqref{eq1} and $(2q^2-2q)j$ and arrive at the following expression:
\[
q(q^2-2q+3)(\chi_\ell+ \chi_{\bar{\ell}}) + 3q(\chi_{\ell^{\perp_E}}+ \chi_{\bar{\ell}^{\perp_E}}).
\]
We now subtract $3q(M_\ell+M_{\bar{\ell}})$:
\begin{align*}
&q(q^2-2q+3)(\chi_\ell+ \chi_{\bar{\ell}}) + 3q(\chi_{\ell^{\perp_E}}+ \chi_{\bar{\ell}^{\perp_E}}) - 3q(M_\ell+M_{\bar{\ell}}) \\ &\quad =q(q^2-2q+3)(\chi_\ell+ \chi_{\bar{\ell}}) - 3q(q^2+1)(\chi_\ell+ \chi_{\bar{\ell}})\\
& \quad = -2q^2(q+1)(\chi_\ell+ \chi_{\bar{\ell}}).
\end{align*}
Therefore, $\chi_\ell+ \chi_{\bar{\ell}}$ is in the row space of $M$. The $\chi_\ell$ vectors form a spanning set for the vector space over $\mathbb{C}^{\mathcal{L}_E}$ by definition. Therefore, the set $\{\chi_\ell E_i \mid \ell \in \mathcal{L}_E\}$ forms a basis for $V_i$ for $0 \leqslant i \leqslant 4$. Hence, to show that $V_i$ lies in the row space of $M$, it is sufficient to show that $\chi_\ell E_i$ does, for any choice of external line $\ell$. We first calculate the $\chi_\ell E_i$, based on their expressions as the linear combinations of adjacency matrices above (see Equations \eqref{E1}, \eqref{E2}, \eqref{E3}, \eqref{E4}):
\begin{align*}
\chi_\ell E_0 &= \frac{1}{q^2(q^2-1)}j, \\
\chi_\ell E_1 &= \frac{1}{2q(q+1)}((q-1)(\chi_\ell - \chi_{\bar{\ell}}) +\chi_{\bar{\ell}^{\perp_E}} - \chi_{\ell^{\perp_E}}),\\
\chi_\ell E_2  &= \frac{1}{2q^2(q-1)}(-2j+ q((q-1)^2\chi_\ell + \chi_{\ell^{\perp_E}} + (q-1)^2\chi_{\bar{\ell}} + \chi_{\bar{\ell}^{\perp_E}})),\\
\chi_\ell E_3 &= \frac{1}{2q(q+1)}((q^2+1)\chi_\ell - (q^2+1) \chi_{\bar{\ell}} -\chi_{\bar{\ell}^{\perp_E}} + \chi_{\ell^{\perp_E}}),\\
\chi_\ell E_4 &= \frac{1}{2q(q^2-1)}(2j+(q+1)((q-1)(\chi_\ell +\chi_{\bar{\ell}}) - \chi_{\bar{\ell}^{\perp_E}} - \chi_{\ell^{\perp_E}})).
\end{align*}
Recall that the rows $M_\ell$ of $M$ are of the form $(q^2+1)\chi_\ell + \chi_{\{\ell\}^{\perp_E}}$ for each external line $\ell$. Then $ \chi_\ell E_3$ is in the row space of $M$, since it is a linear combination of the $M_\ell - M_{\bar{\ell}}$ vectors. Now consider $\chi_\ell E_2$. Since $j$ is in the row space of $M$, we only need to show that $(q-1)^2\chi_\ell + \chi_{\ell^{\perp_E}} + (q-1)^2\chi_{\bar{\ell}} + \chi_{\bar{\ell}^{\perp_E}}$ is also in the row space of $M$. We can rewrite this vector as
\[
((q^2+1)\chi_\ell + \chi_{\ell^{\perp_E}} ) + ((q^2+1)\chi_{\bar{\ell}} + \chi_{\bar{\ell}^{\perp_E}} ) + ((q-1)^2-(q^2+1))(\chi_\ell+\chi_{\bar{\ell}}).
\]
This vector lies in the row space of $M$ since the first two terms are $M_\ell$ and $M_{\bar{\ell}}$, and $\chi_\ell+ \chi_{\bar{\ell}}$ also lies in the row space of $M$. Finally, consider $\chi_\ell E_4$. Using the same reasoning as in the last case, it is sufficient to show that $(q-1)(\chi_\ell +\chi_{\bar{\ell}}) - \chi_{\bar{\ell}^{\perp_E}} - \chi_{\ell^{\perp_E}}$ is in the row space of $M$. We can rewrite this expression as 
\[
-((q^2+1)\chi_\ell + \chi_{\ell^{\perp_E}} ) - ((q^2+1)\chi_{\bar{\ell}} + \chi_{\bar{\ell}^{\perp_E}} ) + ((q-1)+(q^2+1))(\chi_\ell+\chi_{\bar{\ell}}),
\]
which is in the rowspace of $M$. Therefore, $V_0 \perp V_2 \perp V_3 \perp V_4$ is a subspace of the row space of $M$. This implies that $M$, and therefore $A$, has rank at least the dimension of $V_0 \perp V_2 \perp V_3 \perp V_4$. Hence $A$ has rank equal to the dimension of $V_0 \perp V_2 \perp V_3 \perp V_4$ and its rows (that is, the set $\{ \chi_{[P]} \mid P \in \mathcal{P}_E\}$) form a spanning set of $V_0 \perp V_2 \perp V_3 \perp V_4$.
\end{proof}

\begin{corollary}
\label{spancoroll}
The set of vectors $\{(q^3-q)\chi_{[P]} - j \mid P \in \mathcal{P}_E \}$ forms a spanning set of $V_2 \perp V_3 \perp V_4$.
\end{corollary}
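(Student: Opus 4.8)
The plan is to read this off directly from Theorem~\ref{chipsspan} together with the two vanishing identities already established. Write $w_P := (q^3-q)\chi_{[P]} - j$ for an external point $P$. First I would check that each $w_P$ genuinely lies in $V_2\perp V_3\perp V_4$, i.e. that its projections onto $V_0$ and $V_1$ vanish. The identity $w_P E_0 = 0$ is exactly the computation $\big((q^3-q)\chi_{[P]}-j\big)E_0 = 0$ carried out in the setup preceding Proposition~\ref{chipinspan}. For the $V_1$-component, note that $j$ generates $V_0$ and hence $jE_1 = 0$, while $\chi_{[P]}E_1 = 0$ by Proposition~\ref{chipinspan}; by linearity $w_P E_1 = 0$. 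Thus $\{w_P \mid P\in\mathcal{P}_E\}\subseteq V_2\perp V_3\perp V_4$.

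For the spanning claim I would use a one-line dimension count. Since $q\geqslant 2$ is a prime power, $q^3-q=q(q-1)(q+1)\neq 0$, so $\chi_{[P]} = \tfrac{1}{q^3-q}(w_P + j)$; consequently $\mathrm{span}\{w_P \mid P\in\mathcal{P}_E\} + \mathbb{C}j$ contains every $\chi_{[P]}$, and by Theorem~\ref{chipsspan} it therefore equals $V_0\perp V_2\perp V_3\perp V_4$ (the reverse containment is immediate since $w_P\in V_2\perp V_3\perp V_4$ and $j\in V_0$). As $V_0=\mathbb{C}j$ is one-dimensional, $\dim\big(V_0\perp V_2\perp V_3\perp V_4\big) = \dim\big(V_2\perp V_3\perp V_4\big)+1$, so adjoining the single vector $j$ to $\mathrm{span}\{w_P\}$ raises its dimension by at most one; combined with $\mathrm{span}\{w_P\}\subseteq V_2\perp V_3\perp V_4$ from the first paragraph, this forces $\mathrm{span}\{w_P\} = V_2\perp V_3\perp V_4$.

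There is no real obstacle here: all of the substance is contained in Theorem~\ref{chipsspan}, and the corollary merely records that subtracting off the (scaled) $V_0$-component $j$ turns a spanning set of $V_0\perp V_2\perp V_3\perp V_4$ into a spanning set of the complementary summand $V_2\perp V_3\perp V_4$. The only points needing a moment's attention are the nonvanishing of $q^3-q$ and the fact that $j\in V_0$ (so that $jE_0\neq 0$ but $jE_1=0$), both of which are immediate.
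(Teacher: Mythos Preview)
Your proof is correct and follows essentially the same approach as the paper's. The paper's argument is terser---it only records $w_P E_0=0$ (relying implicitly on $\chi_{[P]}\in V_0\perp V_2\perp V_3\perp V_4$ to kill the $V_1$-component) and then concludes immediately---whereas you make the $E_1$-vanishing and the dimension count explicit, but the underlying idea is identical.
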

\begin{proof}
By Theorem \ref{chipsspan}, the set $\{ \chi_{[P]} \mid P \in \mathcal{P}_E\}$ forms a spanning set of $V_0 \perp V_2 \perp V_3 \perp V_4$.
 Now, $((q^3-q)\chi_{[P]} - j)E_0 = \frac{1}{q^2(q^2-1)}\left((q^3-q)q - q^2(q^2-1)\right) = 0$, so $(q^3-q)\chi_{[P]} - j \in V_2\perp V_3 \perp V_4$ for every external point $P$. Therefore,  the set $\{(q^3-q)\chi_{[P]} - j \mid P \in \mathcal{P}_E \}$ forms a spanning set of $V_2 \perp V_3 \perp V_4$.
\end{proof}

\begin{corollary}
\label{SinV0V1}
Let $\mathcal{R}$ be a relative $m$-cover of a generalised quadrangle $\GQ$ of order $(q^2,q)$ relative to a doubly subtended subquadrangle $\subGQ$ of order $(q,q)$. Then $\chi_\mathcal{R}$ lies in $V_0 \perp V_1$.
\end{corollary}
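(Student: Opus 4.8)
The plan is to deduce this immediately from the orthogonality relation already recorded just before Proposition \ref{chipinspan}, combined with the spanning statement of Corollary \ref{spancoroll}. First I would recall the elementary bookkeeping: since $\mathcal{R}$ is a relative $m$-cover, each external point $P$ lies on exactly $m$ lines of $\mathcal{R}$, so $\chi_{[P]}\cdot\chi_{\mathcal{R}}=m$, while $j\cdot\chi_{\mathcal{R}}=|\mathcal{R}|=m(q^3-q)$ by Lemma \ref{RmCsize}. Hence $\big((q^3-q)\chi_{[P]}-j\big)\cdot\chi_{\mathcal{R}}=(q^3-q)m-m(q^3-q)=0$ for every $P\in\mathcal{P}_E$.

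Next I would invoke Corollary \ref{spancoroll}, which says that the family $\{(q^3-q)\chi_{[P]}-j : P\in\mathcal{P}_E\}$ spans $V_2\perp V_3\perp V_4$. The computation of the previous paragraph then shows that $\chi_{\mathcal{R}}$ is orthogonal to a spanning set of $V_2\perp V_3\perp V_4$, hence orthogonal to the whole subspace $V_2\perp V_3\perp V_4$. Since $\mathbb{C}^{\mathcal{L}_E}=V_0\perp V_1\perp V_2\perp V_3\perp V_4$ is an orthogonal decomposition (the $V_i$ being the images of the pairwise orthogonal minimal idempotents $E_i$ of the Bose--Mesner algebra), this forces $\chi_{\mathcal{R}}\in(V_2\perp V_3\perp V_4)^{\perp}=V_0\perp V_1$, which is exactly the assertion of the corollary.

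There is essentially no obstacle here: the substantive work has already been carried out in Theorem \ref{chipsspan} and Corollary \ref{spancoroll}. The only points requiring a moment's care are that $\chi_{\mathcal{R}}$ is a real $0$--$1$ vector, so the relevant pairing is the ordinary dot product with respect to which the eigenspace decomposition into the $V_i$ is genuinely orthogonal, and that it is precisely the identity $|\mathcal{R}|=m(q^3-q)$ that makes the specific combination $(q^3-q)\chi_{[P]}-j$ annihilate $\chi_{\mathcal{R}}$ rather than some nonzero multiple of $j$.
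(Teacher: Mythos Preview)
Your proof is correct and follows essentially the same approach as the paper: use Lemma \ref{RmCsize} to verify that $\chi_{\mathcal{R}}$ is orthogonal to each vector $(q^3-q)\chi_{[P]}-j$, and then invoke Corollary \ref{spancoroll} to conclude that $\chi_{\mathcal{R}}\in V_0\perp V_1$. The paper's proof is simply a more compressed version of what you wrote.
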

\begin{proof}
By Corollary \ref{spancoroll}, 
it is sufficient to show that $\chi_\mathcal{R} \cdot ((q^3-q)\chi_{[P]} - j ) = 0$ for all $P \in \mathcal{P}_E$. By Lemma \ref{RmCsize}, we have 
\[
\chi_\mathcal{R} \cdot ((q^3-q)\chi_{[P]} - j ) = m(q^3-q) - m(q^3-q) = 0.
\] 
\end{proof}

We are now ready to prove Theorem \ref{bigthm}.

\begin{proof}[of Theorem \ref{bigthm}]
Let $\mathcal{R}^\sigma$ denote the image of $\mathcal{R}$ under $\sigma$.
We first calculate the product of $\chi_\mathcal{R}$ with each of the adjacency matrices.
 First, $\chi_\mathcal{R}A_4 = \chi_{\mathcal{R}^\sigma}$ since there will be a 1 in the $j$th position of $\chi_\mathcal{R}A_4 $ if and only if the image of the corresponding line under $\sigma$ is contained in $\mathcal{R}$, and 0 otherwise. Let us now calculate $\chi_\mathcal{R}A_3$. This is equivalent to counting how many elements of $\mathcal{R}$ are concurrent with each external line. If $\ell$ is an external line in $\mathcal{R}$, then it will be concurrent with $(m-1)(q^2+1)$ lines of $\mathcal{R}$ (that is, $m-1$ for each point on the line). If $\ell$ is not in $\mathcal{R}$, then there will be $m(q^2+1)$ lines of $\mathcal{R}$ concurrent with $\ell$. Summarising,
\[
 \chi_\mathcal{R}A_3 = (q^2+1)(m-1)\chi_\mathcal{R} + m(q^2+1)(j-\chi_\mathcal{R}) = m(q^2+1)j-(q^2+1)\chi_\mathcal{R}.
\]
We now calculate $\chi_\mathcal{R}A_1$. This is equivalent to counting the number of elements of $\mathcal{R}$ that are concurrent with the image $\bar{\ell}$ of each external line $\ell$ under $\sigma$. Since $\sigma$ preserves incidence, if $n$ is a line in $\mathcal{R}$, then $n$ is concurrent with $\bar{\ell}$ if and only if $\bar{n}$ is concurrent with $\ell$. Therefore, we may equivalently count how many lines in $\mathcal{R}^\sigma$ are concurrent with each external line $\ell$, which is the same as calculating $\chi_{\mathcal{R}^\sigma} A_3$. Using the calculation we just completed, we have 
\[
 \chi_\mathcal{R} A_1 = \chi_{\mathcal{R}^\sigma} A_3 = m(q^2+1)j-(q^2+1)\chi_{\mathcal{R}^\sigma}.
\]
Now, it remains to calculate $\chi_\mathcal{R}A_2$. We will make use of the fact that the sum of all of the adjacency matrices is equal to $J$. Therefore,
\begin{align*}
 \chi_\mathcal{R}A_2 & = \chi_\mathcal{R}J - \left( \chi_\mathcal{R}I + \chi_\mathcal{R} A_1 + \chi_\mathcal{R} A_3 + \chi_\mathcal{R} A_4 \right)\\
& = m(q^3-q)j- \big(\chi_\mathcal{R} +  m(q^2+1)j-(q^2+1)\chi_{\mathcal{R}^\sigma} \\ 
& \qquad + m(q^2+1)j-(q^2+1)\chi_\mathcal{R} + \chi_{\mathcal{R}^\sigma}\big)\\
& = m(q^3-2q^2-q-2)j+q^2(\chi_\mathcal{R} + \chi_{\mathcal{R}^\sigma}).
\end{align*}
We now make use of Equations \eqref{E2}, \eqref{E3} and \eqref{E4} to compute the projections of $\chi_\mathcal{R}$ into $V_2$, $V_3$ and $V_4$. They are as follows:
 \begin{align*}
  \chi_\mathcal{R} E_2 & = \frac{1}{q^2(q^2-1)} \left(2mq(q+1)j - q^2(q+1)(\chi_\mathcal{R} + \chi_{\mathcal{R}^\sigma})\right),\\
  \chi_\mathcal{R} E_3 &= 0, \\
  \chi_\mathcal{R}E_4 &= \frac{1}{q^2(q^2-1)}\left(\frac{q^2(q+1)^2}{2}(\chi_\mathcal{R} + \chi_{\mathcal{R}^\sigma}) -mq(q+1)^2j \right).
 \end{align*}
 By Corollary \ref{SinV0V1}, $\chi_\mathcal{R} \in V_0 \perp V_1$ and therefore $\chi_\mathcal{R} E_i=0$ for $i \in \{2,3,4\}$. 
 We only need to consider $i=2$: here we see that $2m j = q(\chi_\mathcal{R} + \chi_{\mathcal{R}^\sigma})$.
 Therefore, $\chi_\mathcal{R} + \chi_{\mathcal{R}^\sigma}$ is the constant vector with value $2m/q$, and since
 the values of $\chi_\mathcal{R} + \chi_{\mathcal{R}^\sigma}$ lie in $\{0,1,2\}$, it follows immediately that $q$ is even,
 $m=q/2$, and $\chi_\mathcal{R} + \chi_{\mathcal{R}^\sigma}=j$. That is, $\mathcal{R}$ is a relative hemisystem
 and $\mathcal{R}^\sigma$ is the complement of $\mathcal{R}$.
 
\end{proof}

\subsection*{Acknowledgements}
The authors are indebted to Michael Giudici for many discussions on the material in this work.
The first author acknowledges the support of the Australian Research Council Future Fellowship
FT120100036. The second author acknowledges the support of a Hackett Postgraduate Scholarship. The authors also thank Daniel Horsley for his mention of an elementary result during his talk at Combinatorics 2016, which was instrumental in the proof of the main result of this paper.

%

\end{document}